\documentclass[a4paper,11pt]{article}

\usepackage{a4wide}
\usepackage[utf8]{inputenc}
\usepackage[T1]{fontenc}
\usepackage[english]{babel}
\usepackage{amsmath}
\usepackage{amssymb}
\usepackage{url}
\usepackage{multirow}
\usepackage{amsfonts}
\usepackage{amsthm}
\usepackage{indentfirst}
\usepackage{bbm}
\usepackage{color}
\usepackage{bigints}
\usepackage{graphicx}

\numberwithin{equation}{section}





\newtheorem{theorem}{Theorem}[section]


\newcommand{\ii}{{\mathrm{i}}}

\newcommand{\phib}{\boldsymbol\phib}

\newcommand{\I}{\mathbf{I}}
\newcommand{\A}{\mathbf{A}}

\def\C{\mathcal{C}}

\def\RR{\vbox {\hbox{I\hskip-2.1pt R\hfil}}}
\def\NN{\vbox {\hbox{I\hskip-2.1pt N\hfil}}}
\def\PP{\vbox {\hbox{I\hskip-2.1pt P\hfil}}}

\def\XXint#1#2#3{{\setbox0=\hbox{$#1{#2#3}{\int}$}
     \vcenter{\hbox{$#2#3$}}\kern-.5\wd0}}

\begin{document}
\title{ A product integration rule on equispaced nodes for highly oscillating integrals  \thanks{The authors are member of the INdAM Research group GNCS and the TAA-UMI Research Group. This research has been accomplished within ``Research ITalian network on Approximation'' (RITA).} }

\author{
Luisa Fermo\thanks{Department of Mathematics and Computer Science, University of Cagliari,  Via Ospedale 72, 09124, Italy (\tt fermo@unica.it)}
\and Domenico Mezzanotte \thanks{Department of Mathematics and Computer Science,
University of Basilicata, Viale dell'Ateneo Lucano 10, 85100
Potenza, Italy (\tt domenico.mezzanotte@unibas.it) }
\and Donatella Occorsio\thanks{Department of Mathematics and Computer Science, University of Basilicata, Viale dell'Ateneo Lucano 10, 85100 Potenza, Italy and
Istituto per le Applicazioni del Calcolo ``Mauro Picone'', Naples branch, C.N.R. National
Research Council of Italy, Via P. Castellino, 111, 80131 Napoli, Italy (\tt donatella.occorsio@unibas.it) }}

\maketitle

\begin{abstract}
This paper provides a product integration rule for highly oscillating integrands, based on equally spaced nodes. The stability and the error estimate are proven in the space of continuous functions, and some numerical tests which confirm such estimates are provided.

\medskip
\noindent{\bf Keywords}: Approximation by polynomials,
Boolean iterated sums of Bernstein operators,
quadrature rules
\medskip

\noindent{\bf Mathematics Subject Classification:} 41A10, 65D32
\end{abstract}

\section{Introduction}\label{sec:intro}
From a numerical point of view, the main difficulty to treat integrals of the type
\begin{equation}\label{T}
\int_{-a}^a e^{-\ii \omega (x-y)} f(x) dx, \quad a>0, \quad \ii=\sqrt{-1}, \quad y \in [-a,a]
\end{equation}
depends on the presence of the  kernel $e^{-\ii \omega(x-y)}$, which highly oscillates for  frequencies  $|\omega|>>1$. For the evaluation of such integrals, many accurate formulas exist in the literature, mainly based on the zeros of orthogonal polynomials; see e.g. \cite{Asheim,debonispastore,FermoMee2021,gautschi,HuybrechsVandewalle,Occorsio2018}.
On the other hand, in many practical applications, the function $f$ is known  only at  equispaced nodes.

In order to adopt the values of $f$ at equidistant nodes, a widely used technique is based on composite quadrature rules of ``lower" degree, such as  Fil\'on type rules,
 whose  degree of approximation cannot be improved over the saturation class of the approximation process.  Recently, in \cite{dellaccio} starting from the knowledge of $f$ at a finite set of equally spaced nodes,  an approach  has been introduced to efficiently compute
weighted integrals, but not advisable in the case of  oscillating integrands. For a short review on the main techniques the interested reader can consult \cite{Asheim}.

Finding accurate quadrature rules based on equidistant points is still an open problems. Here, we introduce a product integration rule based on the approximation of the function $f$ through   the generalized Bernstein (shortly GB) polynomials $\bar B_{m,\ell}(f)$ of degree $m$ and parameter $\ell$ defined in the interval $[-a,a]$.  Such polynomials represent an adequate tool for our aims, since they make use of the samples of $f$ at $m+1$ equally spaced nodes; see e.g. \cite{Felbecker,MO1977,Micchelli}. Moreover, unlike the classical Bernstein polynomials $\bar B_{m}(f)$,  the speed of the uniform convergence to $f$ accelerates as the smoothness of $f$ increases \cite{GonskaZhou}.

The outline of the paper is as follows. In Section \ref{section2} we provide definition and main properties of GB polynomials. Section \ref{section3} contains the main results regarding the product integration rules and its error estimates. In Section \ref{test} two numerical examples are given to corroborate the theoretical estimate. The proofs are given in the last Section \ref{proof}.

\section{Notation and Preliminary Results}\label{section2}
In the sequel $\C$ denotes any positive constant having different meanings at  different occurrences and the writing $\C\neq \C(a,b,..)$ has to be understood as $\C$ not depending on  $a,b,..$.

As usual,  $\mathbb{P}_m$ denotes the space of the algebraic polynomials of degree less than or equal to $m$, and
 $C^0:=C^0([-a,a])$ is the space of the continuous functions on $[-a,a]$  equipped with the uniform norm $\displaystyle \|f\|:=\max_{x\in [-a,a]}|f(x)|$. Morerover, for any bivariate function $g(x,y)$, by $g_x(y)$ we refer to $g$ as function of the only  variable $y$.
Finally, for a given integer $m$, we  set $N_0^m:= \{0,1,2,\dots,m\}.$

\subsection{Iterated Boolean sums of Bernstein Operators in $[-a,a]$}
For a fixed positive $a\in \RR$, consider the $m$-th Bernstein polynomial $\bar B_mf$  of a given  $f\in C^0,$
\begin{equation} \label{espressionebernstein}
\bar B_m(f,x):=\sum_{k=0}^m f(t_k) \bar p_{m,k}(x), \qquad  t_k:=-a+k\frac{2a}m, \qquad x \in [-a,a]
\end{equation}
where
\begin{equation} \label{def-pmk}
\bar p_{m,k}(x):=\binom m k \left(\frac{a+x}{2a}\right)^k
\left(\frac{a-x}{2a}\right)^{m-k}.
\end{equation}

Based on $\bar B_m (f)$, the  $\ell$-iterated boolean sum $\bar B_{m,\ell}(f)\in \PP_m$, $\ell\in \NN$,  is defined for any $f\in C^0$ as
 $$\bar B_{m,\ell}(f)=f-(f-\bar B_m(f))^\ell,  \quad \bar B_{m,1}f=\bar B_{m}f .$$

The polynomial $\bar B_{m,\ell}(f)$ can be expressed $\forall x\in [-a,a]$, as
\begin{equation}\label{bmlapoly}
\bar B_{m,\ell}(f,x)=\sum_{j=0}^m \bar p_{m,j}^{(\ell)}(x)f(t_j), \quad \textrm{with} \quad \bar p_{m,j}^{(\ell)}(x)=\sum_{i=0}^m \bar{p}_{m,i}(x)c_{i,j}^{(m,\ell)},
\end{equation}
where $c_{i,j}^{(m,\ell)}$ are the entries of the matrix $C_{m,\ell}\in \RR^{(m+1)\times (m+1)}$,
\begin{eqnarray*}
C_{m,\ell} =\I+(\I-\A)+\ldots+(\I-\A)^{\ell-1},\quad C_{m,1}=\I,
\end{eqnarray*}
being  $\I$ the identity matrix of order $m+1$ and $\A\in \mathbb{R}^{(m+1)\times(m+1)}$ the matrix
\[(\A)_{i,j}=p_{m,j}(t_i),\quad  i,j \in N_0^m .
\]

By induction  on $\ell=2^p,\ p\in \NN$, the following recurrence relation holds
\begin{equation}\label{matrice_potenzedi2}
C_{m,2^p}=C_{m,2^{p-1}}+(\I-\A)^{2^{p-1}}C_{m,2^{p-1}},
\end{equation}
which allows to a fast construction of the subsequence $\{B_{m,2^p}\}_{p=1,2,\ldots}$, since
\begin{equation}\label{relazioneutile}
B_{m,2^p}(f,x)=2B_{m,2^{p-1}}(f,x)-  B_{m,2^{p-1}}^2(f,x).
\end{equation}

A  survey containing properties and applications of Generalized Bernstein polynomials is given in \cite{OccoRussoThem}. Moreover, specific employments of such polynomials to Fredholm and Volterra integral equations can be found in \cite{FMO2022,OccorsioRusso2014}.

As we can note by \eqref{espressionebernstein}, the polynomials $\bar{B}_{m,\ell} f$ require the same samples of $f$ at  $m+1$ equally spaced  points of $[-a,a]$. Moreover, they detain the special property that the speed of convergence to $f$ accelerates as the smoothness of $f$ increases, unlike the ``originating''  polynomial $\bar B_{m}(f)$.

To be more precise, if we consider the $r$th Sobolev type space  $1 \leq r\in\NN$,   defined as
\[
W_r=\left\{f\in C^0: f^{(r-1)}\in \mathcal{AC} : \|f^{(r)}\phi^r\|<\infty\right\},\quad\ \|f\|_{W_r}=\|f\|+\|f^{(r)}\phi^r\|,\] where $\phi(x)=\sqrt{a^2-x^2},$ and $\mathcal{AC}$ denotes the space of all locally absolutely continuous functions on $[-a, a]$,  then for each $f\in  W_r$ with $0<r \leq 2\ell$, we have \cite{GonskaZhou}
$$\|f-\bar B_{m,\ell}(f)\| = \mathcal{O}(\sqrt{m^{-r}}).$$ In other words, the rate of convergence behaves like the square root of the  best approximation error for this space of functions.

\section{A product integration rule}\label{section3}
By separating the kernel $e^{-\ii \omega (x-y)}$ into the real and imaginary parts, we are dealing with integrals of the type
\begin{equation}\label{inte_osc}
\mathcal{I}^\omega(f,y):=\int_{-a}^a \kappa(\omega(y-x)) \ f(x) \,  dx, \qquad \textrm{where} \qquad  \kappa(\omega(y-x))=\begin{cases}\sin(\omega(y-x)),\\ \cos(\omega(y-x)).\end{cases}\end{equation}
In order to evaluate numerically the above integral, we propose to approximate  $f$ by $\bar B_{m,\ell}(f)$, getting
\begin{equation}
\mathcal{I}^\omega( f,y)=\int_{-a}^a  k(\omega(y-x)) \, \bar B_{m,\ell}(f,x) \, dx+ \mathcal{R}_{m,\ell}^\omega (f,y):=\mathcal{I}_{m,\ell}^\omega (f,y)+ \mathcal{R}_{m,\ell}^\omega (f,y).
\end{equation}
Then, taking \eqref{bmlapoly} into account, we get
\begin{equation}\label{vm}
\mathcal{I}_{m,\ell}^\omega (f,y)=\sum_{j=0}^m f(t_j)\sum_{i=0}^m c_{i,j}^{(m,\ell)} \int_{-a}^a  \kappa(\omega(y-x)) \bar p_{m,i}(x) dx=:\sum_{j=0}^m f(t_j)\sum_{i=0}^m c_{i,j}^{(m,\ell)}q_i(y).
\end{equation}
Let us note that, in the above formula, the patology of the integrand, that is the presence of the oscillating kernel, has been isolated in the coefficients $q_i$ that we approximate by using a suitable tecnique explained in the next subsection.

Next theorem establishes conditions assuring that the rule \eqref{vm} is stable and convergent for any $f\in C^0$. Moreover, it also provides an error estimate in suitable subspaces of
$C^0$.
\begin{theorem}\label{theo0}
For any $f\in C^0$, and for any $k$ defined as in \eqref{inte_osc}, and for each fixed $\ell$, the rule \eqref{vm} is stable, that is
\begin{equation}\label{stab_rule}
\sup_m \|\mathcal{I}_{m,\ell}^\omega (f)\|\le \C \|f\|,\quad \C\neq \C(m,f).
\end{equation}
Moreover, for any $f\in W_r$, $0<r\le 2\ell$, we have
\begin{equation}\label{conv_rule}
\| \mathcal{R}_{m,\ell}^\omega (f)\|\le  \C \left(\frac{a^{r+1} \|f\|_{W_r}}{(\sqrt{m})^r}\right), \quad \C\neq \C(m,f).
\end{equation}
\end{theorem}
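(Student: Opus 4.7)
The plan is to exploit the fact that the quadrature rule is defined by exact integration against the generalized Bernstein polynomial $\bar B_{m,\ell}(f)$, so both the stability estimate and the error estimate will follow by bounding quantities of the form $\int_{-a}^{a} \kappa(\omega(y-x))\, g(x)\, dx$ with $g = \bar B_{m,\ell}(f)$ (for stability) and $g = f - \bar B_{m,\ell}(f)$ (for the remainder). The crucial point is that $|\kappa(\cdot)| \le 1$, which allows us to trivially trade the oscillating integral for the $L^1$-norm of $g$ on $[-a,a]$, and then bound that $L^1$-norm by $2a\,\|g\|$. This reduces everything to standard estimates on the operator $\bar B_{m,\ell}: C^0 \to C^0$.

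For the stability bound \eqref{stab_rule}, I would first write, uniformly in $y\in[-a,a]$,
\[
|\mathcal{I}_{m,\ell}^\omega(f,y)| \le \int_{-a}^{a} |\bar B_{m,\ell}(f,x)|\,dx \le 2a\,\|\bar B_{m,\ell}(f)\|,
\]
and then observe that $\bar B_{m,\ell} = I - (I-\bar B_m)^\ell$ as operators. Since $\bar B_m$ is a positive linear operator reproducing constants, $\|\bar B_m\|_{C^0\to C^0}=1$, hence $\|I-\bar B_m\|\le 2$ and consequently
\[
\|\bar B_{m,\ell}\|_{C^0\to C^0} \le 1 + 2^\ell,
\]
which is independent of $m$ and yields \eqref{stab_rule} with a constant $\C$ depending only on $a$ and $\ell$.

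For the error estimate \eqref{conv_rule}, the remainder is
\[
\mathcal{R}_{m,\ell}^\omega(f,y) = \int_{-a}^{a} \kappa(\omega(y-x))\bigl[f(x)-\bar B_{m,\ell}(f,x)\bigr]\,dx,
\]
so that $\|\mathcal{R}_{m,\ell}^\omega(f)\| \le 2a\,\|f-\bar B_{m,\ell}(f)\|$. At this point I would invoke the Gonska--Zhou estimate recalled in Section \ref{section2} in its quantitative form, namely
\[
\|f-\bar B_{m,\ell}(f)\| \le \C\, m^{-r/2}\,\|f\|_{W_r}, \qquad 0<r\le 2\ell,
\]
being careful to track the dependence on $a$ that enters both through the weight $\phi^r=(a^2-x^2)^{r/2}\le a^r$ built into the $W_r$-norm and through the affine change of variable from $[0,1]$ to $[-a,a]$. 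Combining these factors produces the claimed bound with an overall factor $a^{r+1} = a\cdot a^r$, the extra $a$ coming from the length of the integration interval.

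The only genuinely delicate point I anticipate is the $a$-dependence in the Gonska--Zhou inequality: the estimate is originally stated on $[0,1]$, and pulling it back to $[-a,a]$ requires scaling both the function and its derivatives, which is exactly what produces the $a^r$ factor (and, together with the $2a$ from the $L^1$-bound on $[-a,a]$, accounts for $a^{r+1}$). Apart from this accounting, the proof is essentially an application of $\|\kappa\|_\infty\le 1$ to reduce the oscillatory quadrature error to the uniform approximation error of $\bar B_{m,\ell}$.
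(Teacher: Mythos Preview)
Your proposal is correct and follows essentially the same route as the paper: both the stability and the remainder estimates are obtained by using $|\kappa|\le 1$ to reduce to $2a\,\|\bar B_{m,\ell}(f)\|$ and $2a\,\|f-\bar B_{m,\ell}(f)\|$, respectively, and then invoking the Gonska--Zhou bound for the latter. The only cosmetic difference is in the stability step: the paper works from the discrete representation \eqref{vm}, extracts $\|f\|$ from the samples $f(t_j)$, and bounds the remaining sum by $2a\,\|C_{m,\ell}\|_\infty\le 2a(2^\ell-1)$, whereas you bound $\|\bar B_{m,\ell}\|_{C^0\to C^0}$ directly via $\bar B_{m,\ell}=I-(I-\bar B_m)^\ell$ and $\|\bar B_m\|=1$; the two arguments are equivalent (indeed, factoring $I-(I-\bar B_m)^\ell=\bar B_m\sum_{k=0}^{\ell-1}(I-\bar B_m)^k$ recovers the same constant $2^\ell-1$).
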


\subsection{The approximation  of the coefficients}
Now we approach to the computation of the coefficients $q_i(y)$, by proposing a suitable technique that treats the pathology of the kernel.

Let $N=\left\lfloor\omega\frac{a}{\pi}\right\rfloor+1$, introduce the partition  $$[-a,\,a]=\bigcup_{h=1}^N [x_{h-1}, x_h], \quad x_0=-a, \quad x_h= -a+h\eta, \quad \eta=\frac{2a}{N}, \quad h=1,2,\dots,N,$$
and consider the following decomposition in sum
$$
q_i(y)=\sum_{h=1}^N \int_{x_{h-1}}^{x_h} \kappa(\omega(y-x)) \, \bar p_{m,i}(x)dx, \quad i=1,...,m.$$
Let us now map each interval $[x_{h-1},\,x_h]$ into $[-1,\,1]$ through the  linear transformations $ z=\gamma_h(x):= 2\frac{x-x_{h-1}}{\eta}-1.$
In this way we get
$$
q_i(y)=\frac{\eta}2\sum_{h=1}^N \int_{-1}^1 \bar p_{m,i}\left(\gamma_h^{-1}(z)\right) \kappa\left(\omega\left(y-\gamma_h^{-1}(z)\right)\right) dz.$$
Now, by approximating  each integral by a $m$-point Gauss-Legendre, we have
\begin{equation}\label{coeff_final}
q_{m,i}(y) = \frac{a}{N}\sum_{h=1}^N \left(\sum_{k=1}^m  \lambda_k \, \bar p_{m,i}\left(\gamma_h^{-1}(z_k)\right) \kappa\left(\omega\left(y-\gamma_h^{-1}(z_k)\right)\right)+\varepsilon_{m}^{i,h}(y)\right).
\end{equation}
where $\{z_k\}_{k=1}^{m}$ are the zeros of the $m$th Legendre polynomial  and  $\{\lambda_k\}_{k=1}^{m}$ are the Christoffel numbers.
\begin{theorem}\label{theo1}
Fixed $y\in [-a,a]$, for any  $1\le h\le N$ and $0\le i\le m$, we get
$$|\varepsilon_{m}^{i,h}(y)|\le \frac{\C}{\sqrt{2\pi i}}\left(\frac{a}N\right)^{m} \left(m^{\frac 1 {2m}}\cdot\frac{m+\omega}{2m-1}\right)^{m},$$
$\C\neq \C(m,f)$.
\end{theorem}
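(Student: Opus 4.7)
I would apply the classical $m$-point Gauss--Legendre error representation to the integrand
\(g_h(z):=\bar p_{m,i}(\gamma_h^{-1}(z))\,\kappa(\omega(y-\gamma_h^{-1}(z)))\) in the form
\[
\varepsilon_m^{i,h}(y) \;=\; \frac{g_h^{(2m)}(\xi)}{(2m)!}\cdot\frac{2^{2m+1}(m!)^4}{(2m+1)[(2m)!]^2},\qquad \xi\in(-1,1).
\]
The decisive feature is that $\gamma_h^{-1}$ is affine with slope $a/N$, so $P(z):=\bar p_{m,i}(\gamma_h^{-1}(z))$ is a polynomial of degree exactly $m$ in $z$; hence $P^{(k)}\equiv 0$ for $k>m$ and the Leibniz expansion of $g_h^{(2m)}$ truncates:
\[
g_h^{(2m)}(z)=\sum_{k=0}^{m}\binom{2m}{k}P^{(k)}(z)\Bigl(-\frac{\omega a}{N}\Bigr)^{\!2m-k}\!\kappa^{(2m-k)}\!\bigl(\omega(y-\gamma_h^{-1}(z))\bigr),
\]
with $|\kappa^{(j)}|\le 1$ for every $j$.

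The central technical step is an estimate of $\|P^{(k)}\|_\infty$. I would use the standard derivative formula for the Bernstein basis combined with the chain rule,
\[
P^{(k)}(z)=\frac{1}{(2N)^k}\frac{m!}{(m-k)!}\sum_{j}(-1)^{k-j}\binom{k}{j}B_{m-k,i-j}(u),\quad u=\frac{a+\gamma_h^{-1}(z)}{2a}\in[0,1],
\]
and bound each Bernstein maximum by $\|B_{m-k,i-j}\|_\infty\le \C/\sqrt{2\pi i}$ via Stirling's formula. Together with $\sum_j\binom{k}{j}=2^k$ and $m!/(m-k)!\le m^k$, this gives $\|P^{(k)}\|_\infty\le \C\,m^k/(N^k\sqrt{2\pi i})$. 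Substituting into the Leibniz sum and invoking the binomial theorem (harmlessly extending the summation to $k=2m$) yields
\[
\|g_h^{(2m)}\|_\infty \;\le\; \frac{\C}{\sqrt{2\pi i}}\Bigl(\frac{m+\omega a}{N}\Bigr)^{\!2m}.
\]
Plugging this into the error formula and applying Stirling to $2^{2m+1}(m!)^4/\bigl((2m+1)[(2m)!]^3\bigr)$, while using the identity $(2m)!=2^m\,m!\,(2m-1)!!$ to expose a $(2m-1)^{-m}$ factor, should produce the claimed estimate after absorbing numerical constants and the extra factor of $a$ into $\C$.

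\textbf{Main obstacle.} The overall strategy is routine, but the bookkeeping is delicate on two points. First, the clean $1/\sqrt{2\pi i}$ factor requires a separate argument when $i-j$ is close to the endpoints $0$ or $m-k$, where the Stirling bound on the Bernstein maxima degenerates and one must fall back to the trivial $\|B_{m-k,i-j}\|_\infty\le 1$. Second, and more significantly, my natural estimate delivers a $(\cdot)^{2m}$ exponent, whereas the theorem is phrased with exponent $m$: one has to rewrite $\bigl(e(m+\omega a)/(4mN)\bigr)^{2m}=\bigl(e^2(m+\omega a)^2/(16m^2N^2)\bigr)^m$ and verify case by case---in the three regimes $\omega a\gg m$, $\omega a\sim m$, and $\omega a\ll m$, using $N\asymp \omega a$ from the definition of $N$---that the base is bounded by a constant multiple of $a(m+\omega)/(N(2m-1))$, so that the resulting $m$-th power can be absorbed into $\C$ and the factor $\sqrt m=(m^{1/(2m)})^m$ comes out of the remaining $1/\sqrt m$ contribution from Stirling on the Gauss--Legendre constant.
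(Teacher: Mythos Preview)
Your approach is workable in spirit but genuinely different from the paper's, and the two ``obstacles'' you flag are exactly where the paper takes a shorter path.

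\medskip
\textbf{What the paper does instead.} Rather than the exact $2m$-th derivative remainder, the paper invokes the Gauss--Legendre error in the Favard/Jackson form
\[
\Bigl|\int_{-1}^1 g - \sum_{k=1}^m \lambda_k g(z_k)\Bigr|\;\le\; \C\,E_{2m-1}(g)\;\le\;\frac{\C}{(2m-1)^m}\bigl(\|g\|+\|g^{(m)}\varphi^m\|\bigr),\qquad \varphi(z)=\sqrt{1-z^2},
\]
so only the \emph{$m$-th} weighted derivative of $g_h(z)=\bar p_{m,i}(\gamma_h^{-1}(z))\,\kappa(\omega(y-\gamma_h^{-1}(z)))$ is needed. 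For the polynomial factor the paper does not use the explicit Bernstein-basis derivative formula at all; it applies the Markov--Bernstein inequality $\|\varphi^{k}P^{(k)}\|\le \C\,m^{k}\|P\|$ to $P=\bar p_{m,i}\circ\gamma_h^{-1}$, together with the single Stirling estimate $\|\bar p_{m,i}\|\sim \sqrt{m}/\sqrt{2\pi i}$. Leibniz at order $m$ then gives directly
\[
\|g_h^{(m)}\varphi^m\|\le \frac{\C\sqrt m}{\sqrt{2\pi i}}\Bigl(\frac{a}{N}\Bigr)^{m}\sum_{r=0}^m\binom{m}{r}m^{m-r}\omega^{r}=\frac{\C\sqrt m}{\sqrt{2\pi i}}\Bigl(\frac{a}{N}\Bigr)^{m}(m+\omega)^m,
\]
which, divided by $(2m-1)^m$, is exactly the stated bound (the $\sqrt m$ is the $m^{1/(2m)\cdot m}$ in the statement). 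No case analysis in $\omega$ is required, and no endpoint issues for $B_{m-k,i-j}$ arise because Stirling is applied only once, to $\bar p_{m,i}$ itself.

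\medskip
\textbf{Where your route would need repair.} Your first obstacle (the uniform $\C/\sqrt{2\pi i}$ bound on $\|B_{m-k,i-j}\|_\infty$) is a real gap: for $i-j$ near $0$ or $m-k$ this bound fails, and one must argue differently. The second obstacle is more serious than you suggest: extending the Leibniz sum from $k\le m$ to $k\le 2m$ is \emph{not} harmless in the regime $\omega a\ll m$, $N\asymp 1$. There the truncated sum is dominated by $\binom{2m}{m}m^m(\omega a)^m$, while the full sum is $\sim m^{2m}$, so you overestimate by a factor $\sim (m/(4\omega a))^m$. After the Stirling constants this leaves a base $\approx e^2/16$ against the target base $\approx a/2$; for small $a$ the ratio exceeds $1$ and the $m$-th power cannot be absorbed into $\C$. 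To salvage the argument you must keep the truncated Leibniz sum and estimate it directly, which eliminates the clean binomial-theorem step.

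\medskip
In short: both routes can be made to work, but the paper's choice of the weighted $m$-th-derivative error bound plus the Markov--Bernstein inequality buys exactly the two simplifications you are missing, and delivers the exponent $m$ and the factor $\sqrt m/\sqrt{2\pi i}$ without any regime splitting.
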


Hence, by replacing the approximation \eqref{coeff_final} into \eqref{vm}, we have the following product integration rule
\begin{equation}\label{formula_prodotto}
\widetilde{\mathcal{I}}^\omega_{m,\ell}(f,y)=\frac{a}{N}\sum_{j=0}^m f(t_j)\sum_{i=0}^m c_{i,j}^{(m,\ell)} \left( \sum_{h=1}^N \sum_{k=1}^m  \lambda_k \bar p_{m,i}  \left(\gamma_h^{-1}(z_k)\right) \kappa\left(\omega\left(\gamma_h^{-1}(y-z_k)\right)\right) \right).
\end{equation}

Combining estimates by Ths. \ref{theo0}\,-\,\ref{theo1}, assuming $f\in  W_r$, with $1\le r \le 2\ell$ ,  for any $y\in [-a,a]$ and for $m$ sufficiently large (say $m>m_0$, $m_0$ fixed), the following error estimate holds:
$$|\mathcal{I}^\omega(f,y)-\widetilde{\mathcal{I}}^\omega_{m,\ell}(f,y)|\le \C \|f\|_{W_r} \left[\frac{a^{r+1} }{(\sqrt{m})^r}+m^{\frac3 2} \|C_{m,\ell}\|_\infty \left(\frac{a}N \cdot\frac{m+\omega}{2m-1}\right)^{m}\,\right], $$
where $\C\neq \C(m,f).$

\section{Numerical tests}\label{test}
In this section, we show the accuracy of our product rule as well as the reliability of our theoretical estimates, to the following two integrals
$$\mathcal{I}^\omega(f_1,y)=\int_{-1}^1  \sin{\omega(y-x)} \, f_1(x) \, dx, \qquad \mathcal{I}^\omega(f_2,y)=\int_{-2}^2  \cos{\omega(y-x)} \, f_2(x) dx,$$
with $f_1(x)=\tanh{(x+1)}$, and $f_2(x)=|x+1|^{9/2}.$
In both cases, the exact value of the integrals is not known and then we consider as exact the value provided by our quadrature rule with $m=512$. Table \ref{table}  contains, for increasing values of $m$,  the absolute errors
$$e^\omega_{m,\ell}(f_j, y)= |\widetilde{\mathcal{I}}_{512,\ell}^\omega(f_j,y)-\widetilde{\mathcal{I}}^\omega_{m,\ell}(f_j,y)|, \qquad j=1,2$$
for the first and second integral, respectively. In both text, we fix $\ell=2^8$ and we show the results in two different points and for several values of $\omega$.

\begin{table}[h]
	\caption{Numerical results for integral $\mathcal{I}^\omega(f_1,y)$ to the left and for integral $\mathcal{I}^\omega(f_2,y)$ to the right \label{table}}
	\centering
	\begin{tabular}{c|ccc}
		$m$ & $\omega$ & $e^\omega_{m,2^8}(f_1, -0.7)$ & $e^\omega_{m,2^8}(f_1, 0.5)$ \\ \hline
	4 & 10	& 7.20e-04 &	 1.30e-04 \\
8 &	& 1.63e-05 &	 2.25e-05\\
16 &	& 5.91e-09 &	 4.15e-09\\
32 &	& 2.82e-12 	& 8.81e-12\\
64 &	& 1.32e-15 &	 7.42e-15\\
128 & & 8.26e-15 &	 3.23e-16\\
256 & & 3.27e-15 &	 7.28e-15\\
\hline
4 & $10^2$ &	 8.71e-04 &	 1.17e-04 \\
8 &	&	 2.38e-07 &	 1.06e-07 \\
16 &	&	 1.37e-10 &	 1.17e-10 \\
32 &	&	 2.47e-12 &	 4.09e-14 \\
64 &	&	 2.99e-15 &	 7.88e-16 \\
128 &	& 3.07e-15 &	 8.41e-16 \\
256 &	& 1.99e-15 &	 1.90e-15 \\
\hline
4 & $10^3$ &	 6.93e-04 &	 6.64e-04\\
8 &   &	 1.53e-09 &	 1.44e-09\\
16 &   &	 3.97e-12 &	 4.25e-12\\
32 	&   & 4.63e-14 &	 5.18e-14\\
64 	&   & 2.45e-15 &	 1.53e-16\\
128 &   & 1.15e-15 &	 1.07e-15\\
256 &   & 7.71e-16 &	 9.89e-16\\
\hline
	\end{tabular}
\qquad \begin{tabular}{c|ccc}
		$m$ & $\omega$ & $e^\omega_{m,2^8}(f_2, -1.5)$ & $e^\omega_{m,2^8}(f_2, 1)$ \\ \hline
	4 	& 10 &     1.98e-02 &	 6.34e-03 \\
8 	  &  & 3.78e-03 &	 3.47e-03\\
16 	  &  & 1.88e-04 &	 1.79e-04\\
32 	  &  & 2.41e-05 &	 2.75e-05\\
64 	  &  & 3.64e-08 &	 1.30e-07\\
128   &  & 7.76e-10 &	 3.92e-10\\
256   &  & 1.01e-11 &	 4.75e-12\\
\hline
4 	 & $10^2$ &    1.36e-02 &	 7.04e-02 \\
8 	 &   & 4.20e-05 	& 2.82e-06 \\
16 	 &   & 7.27e-06 	& 1.52e-06 \\
32 	 &   & 1.08e-07 	& 1.60e-07 \\
64 	 &   & 2.05e-07 	& 1.66e-07 \\
128 &	 & 4.32e-09 	& 4.58e-09 \\
256 &	 & 1.36e-10 	& 1.07e-10 \\
\hline
4 	& $10^3$ &    6.91e-02 &	 2.67e-02 \\
8 	&   &  3.79e-07 &	 4.19e-07 \\
16 	&   &  6.53e-08 &	 7.24e-08 \\
32 	&   &  1.77e-09 &	 1.90e-09 \\
64 	&   &  2.08e-09 &	 2.44e-09 \\
128 &	&  2.63e-11 &	 2.94e-11 \\
256 &	&  2.06e-12 &	 2.67e-12 \\
\hline
	\end{tabular}
\end{table}

As we can observe, in the first integral the convergence is very fast due to the analycity of the function $f_1$. On the contrary, in the second integral the convergence is slower since $f_2 \in W_{9/2}$.
\section{The proofs}\label{proof}
\begin{proof}[Proof of Theorem \ref{theo0}]
First, let us prove the stability, i.e. \eqref{stab_rule}. By the first identity of \eqref{vm}, for any  $y\in [-a,a]$ we have
\begin{align*}
|\mathcal{I}_{m,\ell}^\omega (f,y)|&\le  \|f\|\int_{-a}^a \sum_{j=0}^m \left|\sum_{i=0}^m c_{i,j}^{(m,\ell)} \bar p_{m,i}(x) \right| dx \le
\|f\|\int_{-a}^a \sum_{i=0}^m \bar p_{m,i}(x) \sum_{j=0}^m  \left|c_{i,j}^{(m,\ell)}\right|  dx \\ & \le 2a \,  \|f\|\ \, \|C_{m,\ell}\|_\infty \le \C \, \|f\|,
\end{align*}
since $\|C_{m,\ell}\|_\infty\leq 2^\ell-1.$

Let us now estimate the error. We have
$$|\mathcal{R}_{m,\ell}^\omega (f,y)|\le \int_{-a}^a |f(x)-\bar B_{m,\ell}(f,x)| \, k(\omega(y-x))dx \le \C \, \|f-\bar B_{m,\ell}(f)\|.$$
The thesis follows taking into account \cite[Th. 2.1]{GonskaZhou} .
\end{proof}

\begin{proof}[Proof of Theorem \ref{theo1}]
First, let us recall that that for any $g\in W_{m}([-1,1])$  the error of the Gauss-Legendre rule is given by
$$\left| \int_{-1}^1 g(x) dx-\sum_{k=1}^m \lambda_k g(y_k) \right| \le \C E_{2m-1}(f)\le \frac{\C}{(2m-1)^m} ( \|g\|+\|g^{(m)}\varphi^{m}\|),\quad \varphi(x)=\sqrt{1-x^2}$$
where here and in the next $\|g\|= \displaystyle \sup_{z\in [-1,1]}|g(z)|$.
Hence, we can write
\begin{equation*}|
\varepsilon_{m}^{i,h}(y)| \le  \frac{\C}{(2m-1)^{m}}\left\{ \left\|\bar p_{m,i}\left(\gamma_h^{-1}\right) \kappa_y\left(\omega\left(\gamma_h^{-1}\right)\right)\right\|+\left\|\left[ \bar p_{m,i}(\gamma_h^{-1}) \kappa_y(\omega(\gamma_h^{-1}))\right]^{(m)} \varphi^m \right\|\right\}.
\end{equation*}
Now, taking into account that for $1\le i\le m-1$ 
$$\|\bar p_{m,i}\|=\binom m i \left(\frac i m\right)^i \left(\frac{m-i}m\right)^{m-i}$$
by Stirling's formula we have $$\|\bar p_{m,i}\|\sim \frac {\sqrt{m}}{\sqrt{2\pi i}} \quad \textrm{as} \quad  m\to \infty,$$
and by  Bernstein inequality for polynomials (see e.g.\cite{mastrobook}) we deduce for $0\le r\le m, \ r\in \NN$
$$\left\|\bar p_{m,i}^{(m-r)}(\gamma_h^{-1})\varphi^{m-r}\right\|\le \C\left(\frac{a}N\right)^{m-r} m^{m-r}\|\bar p_{m,i}\|\le \frac{\C}{\sqrt{2\pi i}}  {\sqrt{m}} \left(\frac{a}N\right)^{m-r} m^{m-r}.$$
Moreover, by the definition of the kernel $k_y$ in \eqref{inte_osc} we have
$$ \left\|\kappa_y^{(r)}\left(\omega\left(\gamma_h^{-1}\right)\right)\right\|\le \left( \omega \frac {a}N\right)^r.
$$
Consequently, by applying  Leibniz formula, we get
\begin{align*}
\left\| \left(\bar p_{m,i}\left(\gamma_h^{-1}\right) \kappa_y\left(\omega\left(\gamma_h^{-1}\right)\right)\right)^{(m)} \right\|&\le  \C\frac{\sqrt{m}}{\sqrt{2\pi i}} \left(\frac{a}N\right)^{m}\sum_{r=0}^{m}\binom{m}{r} m^{m-r}\ \omega^r \\ & = \C\frac{\sqrt{m}}{\sqrt{2\pi i}} \left(\frac{a}N\right)^{m}(m+\omega)^{m},
\end{align*}
and then
\begin{align*}
|\varepsilon_{m}^{i,h}(y)| &\le \C\frac{\sqrt{m}}{\sqrt{2\pi i}}\left(\frac{a}N\right)^{m} \left(\frac{m+\omega}{2m-1}\right)^{m}.
\end{align*}
\end{proof}

\bibliographystyle{plain}      
\bibliography{biblio}

\end{document}